\documentclass[a4paper,12pt]{amsart}
\usepackage{amsmath,amssymb}
\usepackage[shortlabels]{enumitem}
\usepackage{xcolor}

\def\d{\delta}

\def\D{\mathbb D}
\def\C{\mathbb C}
\def\R{\mathbb R}
\def\N{\mathbb N}

\def\bcases{\begin{cases}}
\def\ecases{\end{cases}}

\def\wdt{\widetilde}
\def\ds{\displaystyle}

\newcommand{\cl} {\overline}

\newtheorem{thm}{Theorem}
\newtheorem{prop}[thm]{Proposition}
\newtheorem{cor}[thm]{Corollary}
\newtheorem{lem}[thm]{Lemma}

\renewcommand{\Im}{\operatorname{\rm{Im}}}
\renewcommand{\Re}{\operatorname{\rm{Re}}}
\newcommand{\eps}{\varepsilon}

\newenvironment{proof*}{\vskip 2mm\noindent {}}{\hfill $\Box$ \vskip 2mm}

\title{Models for invariant metrics near pseudoconcave points}

\author{Nikolai Nikolov}
\address{N. Nikolov\\
Institute of Mathematics and Informatics\\
Bulgarian Academy of Sciences\\
Acad. G. Bonchev 8, 1113 Sofia, Bulgaria
\vspace{1mm}
\newline Faculty of Information Sciences\\
State University of Library Studies and Information Technologies\\
Shipchenski prohod 69A, 1574 Sofia,
Bulgaria}

\email{nik@math.bas.bg}

\author{Pascal J. Thomas}
\address{Thomas, Pascal J.\\
Univ Toulouse, INSA Toulouse, CNRS, IMT \\
Toulouse, France}

\email{pascal.thomas@math.univ-toulouse.fr}

\thanks{The first named author was partially supported by the Bulgarian National
Science Fund, Ministry of Education and Science of Bulgaria under contract KP-06-N82/6. The
second named author wishes to thank the Institute of Mathematics and Informatics of the Bulgarian Academy
of Sciences for its hospitality during the time when most of this work was carried out.}

\subjclass[2010]{32F45}

\begin{document}

\keywords{Kobayashi-Royden, Kobayashi-Buseman ans Sibony metrics.}

\begin{abstract}
We give precise estimates of some holomorphically invariant infinitesimal metrics near a
pseudoconcave points in a wide family of ``model'' domains for that situation in $\C^2$. This extends
to metrics (rather distances)
the authors' previous results from \cite[Section 4]{NT}, and also takes into account
defining functions more general than just power functions.
\end{abstract}

\maketitle

\section{Introduction and results}

\subsection{Motivations.}

Holomorphically invariant metrics and distances have long proved useful for various questions in complex analysis and geometry, notably
the mapping problem between domains of $\C^d$. It turns out that the presence of non-pseudoconvex points on the boundary
of the domain gives rise to quantitatively different behavior of the invariant metrics in a neighborhood of those points;
this was explored in \cite{Kr, Fu1, For-Lee, DNT} among others. The interested reader should also consult the survey \cite{Fu2}.

Typically, failure of pseudoconvexity at a boundary point $p$ of a domain $\Omega$ is exploited by showing that some pseudoconcave model domain
can be found with $G\subset \Omega$ and $p\in (\partial G) \cap (\partial \Omega)$.  The behavior of holomorphic invariants
in a range of such domains was studied in \cite[Section 4]{NT}. Here we want to concentrate on infinitesimal metrics rather than distances, and generalize this to a wider range of models.  Note however that we could have considered more general models (with a defining function
also depending on $\Im z_1$ for instance, see \cite{NT}) and extended those estimates to higher dimensions.

\subsection{Basic definitions.}

Let $\D$ stand for the unit disc in the complex plane.
For an open set $\Omega \subset \C^d$, $\mathcal O(\D,\Omega)$ is the set of holomorphic maps from $\D$ to $\Omega$ and
the Kobayashi-Royden metric is defined as follows:
$$
\kappa_\Omega(z;X):=\inf\{\lambda^{-1} :\lambda >0, \exists\varphi\in\mathcal O(\D,\Omega), \varphi(0)=z, \varphi'(0)=\lambda X\}.
$$
This is a Finsler metric (that is, $\kappa_\Omega(z; \alpha X) = |\alpha |\kappa_\Omega(z;X)$, $\alpha \in \C$),
contracting under holomorphic maps, and therefore invariant under biholomorphisms.
It does not always satisfy the triangle
inequality, so it makes sense to define,
for $m\in \N$, $m\ge 1$, the $m$-th Kobayashi-Royden metrics
\[
\kappa^{(m)}_\Omega(z;X):=\inf\{\sum_{j=1}^m \kappa_\Omega(z;X_j) : X_j\in \C^d, 1\le j \le m, \mbox{ and }\sum_{j=1}^m X_j=X\}.
\]
Clearly, $\kappa^{(m)}_\Omega \ge \kappa^{(m+1)}_\Omega$;
$\hat \kappa_\Omega:=\lim_{m\to\infty} \kappa^{(m)}_\Omega$ is the Kobayashi-Buseman metric, the largest Finsler metric
less than the Kobayashi-Royden metric which satisfies the triangle
inequality.

As in \cite{DNT}, we define a metric that sits between the Kobayashi-Royden and Kobayashi-Buseman metrics.
Recall that the
{\it indicatrix} of a metric $M_D$ at a base point $z$ is
$$
I_z M_D:= \left\lbrace v \in T^{\C}_z D: M_D(z,v)<1 \right\rbrace .
$$
The indicatrix of a metric which satisfies the triangle inequality is convex.
The larger the indicatrices, the smaller the metric. We  define $\wdt \kappa_D$ to be {\it the largest
invariant metric with pseudoconvex indicatrices,} i.e., since $\kappa_D$ is the largest invariant metric,
 $I_z\wdt \kappa_D$ is the envelope of holomorphy of
$I_z \kappa_D$ for any $z\in D.$

Using plurisubharmonic functions, we can define
 the Sibony metric of a domain $D$: for $p\in D$, $X\in \C^d$,
\[
S_D(p,X):=\sup\left\{ \partial\bar\partial u(p)(X, \bar X)^{1/2}
:=\left(\sum_{i,j=1}^d\frac{\partial^2 u}{\partial z_i\partial \bar z_j}(p) X_i\bar X_j\right)^{1/2}: u\in A(p,D)\right\},
\]
where $\D$ denotes a unit disc in $\C$, and $A(p,D)$ is the set of (plurisubharmonic) functions on $D$ such that
$u(p)=0$, $u$ is $\mathcal C^2$ near $p$, $\log u$ is plurisubharmonic on $D$, and $0\le u\le 1$ on $D$.
The Sibony metric satisfies the triangle inequality, see for instance \cite[Lemma 2]{For-Lee}.

Recall that for a domain $\Omega\subset \C^d$,
\begin{equation}
\label{order}
S_\Omega\le\hat \kappa_\Omega\le\wdt \kappa_\Omega\le \kappa_\Omega.
\end{equation}


\subsection{Results.}

For $\psi$ a continuous function from $[0,1]$ to $[0,\infty)$ with $\psi(0)=0$, $\psi(1)>0$ let us define
the domain
\begin{equation}
\label{defG}
G_\psi := \{z\in \D^2: \Re z_1 < \psi(|z_2|) \}.
\end{equation}
Note that this is always connected because $\psi(x)\ge 0$, and open because $\psi$ is continuous.  
\smallskip

Let $(x_N,x_T)\in\C^2$ and for $\delta\in(0;1)$,  $p_\delta:=(-\delta,0) \in G_\psi$.
\smallskip

{\bf Standing assumption.}
We will restrict consideration to $\delta \in (0,\delta_0]$, with $\delta_0 <1 $. In all the estimates below, the constants which appear depend on $\psi$ and $\delta_0$.

\begin{thm}
\label{othermet}
If $\frac{\psi(x)}{x}$ is an increasing function on $(0;1)$, there exist $0<c<C$ such that
\[
c \left(\frac{\psi^{-1}(\delta)}{\delta} |x_N| + |x_T|\right)  \le
S_{G_\psi}(p_\delta;(x_N,x_T)) \le
M_{G_\psi}(p_\delta;(x_N,x_T)) \le C \left(\frac{\psi^{-1}(\delta)}{\delta} |x_N| + |x_T|\right),
\]
where $M_{G_\psi}$ stands for either $\kappa^{(2)}_{G_\psi}$ or $\wdt \kappa_{G_\psi}$.
\end{thm}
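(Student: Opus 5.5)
The plan is to prove the upper bound for $\kappa_{G_\psi}$ itself and the lower bound for $S_{G_\psi}$. By \eqref{order}, together with the obvious inequalities $\kappa^{(2)}_{G_\psi}\le\kappa_{G_\psi}$ and $\wdt\kappa_{G_\psi}\le\kappa_{G_\psi}$, this gives the full chain of inequalities for both choices of $M_{G_\psi}$. Put $r:=\psi^{-1}(\delta)$. This makes sense because $\psi(x)/x$ increasing forces $\psi$ to be increasing. The monotonicity of $\psi(x)/x$ gives the two facts used throughout:
\[
\psi(t)\ge \frac{t}{r}\,\delta \ \ (t\ge r), \qquad \psi(t)\le \frac{t}{r}\,\delta \ \ (t\le r), \qquad \frac{\delta}{r}=\frac{\psi(r)}{r}\le \psi(1).
\]

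\emph{Upper bound.} I exhibit two families of analytic discs through $p_\delta$. Fix a small $\eps>0$, depending only on $\psi(1)$ and $\delta_0$, so that all first coordinates below stay in $\D$.

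First family: for $\lambda\in\C$ with $|\lambda|\le \delta/r$, take $\varphi(\zeta)=(-\delta+\lambda\eps\zeta,\ \eps\zeta)$. If $|\eps\zeta|\ge r$, then $\Re\varphi_1<|\lambda|\eps|\zeta|\le \psi(\eps|\zeta|)$. If $|\eps\zeta|<r$, then $\Re\varphi_1<0\le\psi$. Hence $\kappa_{G_\psi}(p_\delta;(\lambda,1))\le 1/\eps$.

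Second family: for $|\eta|\le 1$, take $\varphi(\zeta)=(-\delta+\frac{\delta}{2r}\eps\zeta,\ \eta\eps\zeta)$. Here $\Re\varphi_1<-\delta/2<0$, so $\kappa_{G_\psi}(p_\delta;(\frac{\delta}{2r},\eta))\le 1/\eps$.

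After the rescaling $a'=\frac{r}{\delta}x_N$, the first family covers the cone $|a'|\le|x_T|$ and the second covers $|x_T|\le 2|a'|$. These cones cover all directions. By homogeneity of $\kappa$ we then get $\kappa_{G_\psi}(p_\delta;X)\le C(\frac{r}{\delta}|x_N|+|x_T|)$.

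\emph{Lower bound.} The tangential part is easy. Since $G_\psi\subset\D^2$ and the Sibony metric decreases under inclusion, $S_{G_\psi}(p_\delta;X)\ge S_{\D^2}(p_\delta;X)\ge c|x_T|$ (use $u=|z_2|^2$ in the definition). Because $S$ satisfies the triangle inequality and is comparable to a maximum of the two terms, it remains to prove $S_{G_\psi}(p_\delta;(1,0))\ge c\,r/\delta$.

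For this I would build an admissible $u=e^{v}$ (suitably normalized) with $v$ psh on $G_\psi$, bounded above by a uniform constant, and with $\partial\bar\partial e^{v}(p_\delta)(X,\bar X)\gtrsim (r/\delta)^2|x_N|^2$. The natural candidate comes from the geometry at scale $r$. On $\{|z_2|\le r\}$ we have $\Re z_1<\delta|z_2|/r$, so $G_\psi$ looks like a wedge of aperture $\delta/r$ in the $(\Re z_1,|z_2|)$ variables. This suggests
\[
v(z)=\max\Bigl(A\frac{r}{\delta}\Re z_1 ,\ \Phi(z_2)\Bigr),
\]
with $\Phi$ a radial subharmonic function that is $\le -1$ near $z_2=0$ and dominates $\frac{r}{\delta}\psi(|z_2|)$ where $|z_2|\ge r$. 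Such a $\Phi$ could be taken as a convex function of $\log|z_2|$, or else one could use the Möbius-type change of variable $z_2\mapsto z_2/r$ on a disc. At $p_\delta$ the first term equals $-Ar<0$ and dominates. Then $e^{v}$ has Hessian of size $(Ar/\delta)^2e^{-Ar}$ in the $x_N$ direction. Normalizing by the sup of $e^{v}$ and subtracting the value at $p_\delta$ in the standard Sibony way gives the claim.

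\emph{Main obstacle.} I expect the hard step to be the construction of $\Phi$, namely bounding $v$ uniformly on the part of $G_\psi$ where $|z_2|\ge r$. There $\frac r\delta\Re z_1$ can be as large as $\frac r\delta\psi(1)$, which is unbounded as $\delta\to0$ for flat $\psi$. So the subharmonic majorant must grow from about $0$ at $|z_2|=r$ to about $r\psi(1)/\delta$ at $|z_2|=1$ while still being negative near $p_\delta$. If a single $\Phi$ does not work, I would first try to localize. One option is to replace $G_\psi$ by $G_\psi\cap\{|z_2|<2r\}\subset\{\Re z_1<2\delta\}$ and use the localization principle for the Sibony metric. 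The other is to compose with $\log$ and take $v=\max\bigl(\log\frac{\Re z_1+2\delta}{3\delta},\ K\log\frac{|z_2|}{2r}\bigr)$ suitably truncated, checking the uniformity of the constants in $\delta$ carefully.
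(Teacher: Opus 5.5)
\emph{Upper bound.} The inequality you set out to prove, $\kappa_{G_\psi}(p_\delta;X)\le C(\frac r\delta|x_N|+|x_T|)$, is false, so this part needs a different strategy.

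Your second family leaves $G_\psi$. For $|\zeta|<1$ you only have $\Re\varphi_1(\zeta)<-\delta+\frac{\delta\eps}{2r}$. This is below $-\delta/2$ only if $\eps<r$, whereas $\eps$ is fixed and $r=\psi^{-1}(\delta)\to0$.

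No other choice of discs can repair this. Lemma~\ref{lowlem} with $x_T=0$ and radius $\sqrt r$ (so that $M=\delta$) gives $\kappa_{G_\psi}(p_\delta;(1,0))\ge\frac{\sqrt r}{4\delta}\gg\frac r\delta$. This is Theorem~\ref{main}(3), under the same hypothesis. The theorem is really about the non-convexity of the Kobayashi--Royden indicatrix, which is why only $\kappa^{(2)}$ and $\wdt\kappa$ appear in it.

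Your first family is essentially the disc the paper uses, but it must enter through the convexification:
\begin{itemize}
\item For $\kappa^{(2)}$, split $(x_N,x_T)=x_N(1,\frac r\delta)+(0,x_T-\frac r\delta x_N)$. Estimate the first summand with the first family and the second with $\zeta\mapsto(-\delta,\zeta)$.
\item For $\wdt\kappa$, rotating the first family in $z_2$ places $\eps\,(D(0,\delta/r)\times\partial\D)$ in $\cl{I_{p_\delta}\kappa_{G_\psi}}$. The Hartogs phenomenon then puts $\eps\,(D(0,\delta/r)\times\D)$ in the envelope of holomorphy $I_{p_\delta}\wdt\kappa_{G_\psi}$.
\end{itemize}

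\emph{Lower bound.} Your tangential bound and the reduction to $S_{G_\psi}(p_\delta;(1,0))\ge c\,r/\delta$ are fine. That normal estimate is the real content, however, and it is left open; none of the candidates you list works.
\begin{itemize}
\item In $v=\max(A\frac r\delta\Re z_1,\Phi(z_2))$ the maximum can only raise $v$. So $\sup_{G_\psi}v\ge A\frac r\delta\sup_{G_\psi}\Re z_1$, which is unbounded as $\delta\to0$ whenever $\psi(x)/x\to0$.
\item $e^v$ does not vanish at $p_\delta$. Subtracting $e^{v(p_\delta)}$ gives a function that is negative near $p_\delta$, so you never obtain an element of $A(p_\delta,G_\psi)$.
\item The fallback $\log\frac{\Re z_1+2\delta}{3\delta}$ is the logarithm of a positive pluriharmonic function, hence plurisuperharmonic, not psh.
\item The inclusion $G_\psi\cap\{|z_2|<2r\}\subset\{\Re z_1<2\delta\}$ fails in general, because $\psi(2r)\ge2\delta$.
\end{itemize}

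The missing idea is to use the squared modulus of a bounded holomorphic function of $z_1$ vanishing at $-\delta$, and only on a piece where $|z_2|$ is small. Take $C_1=\psi^{-1}(\delta/2)$; then $\Re z_1\le\delta/2$ on $G_\psi\cap\{|z_2|\le C_1\}$. There $g(z_1)=\frac{z_1+\delta}{z_1-\delta}$ satisfies $|g|\le3$, whatever $\Im z_1$ is, and $|g'(-\delta)|=\frac1{2\delta}$.

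The paper takes $\log u$ to be the maximum of $\log(C_1^2|g(z_1)|^2+|z_2|^2)\le\log(10C_1^2)$ and a function of $\log|z_2|$ that dominates it near $|z_2|=C_1$. On the rest of $G_\psi$, $\log u$ is that function of $\log|z_2|$ alone. Because the local term has the size $C_1^2$ of $|z_2|^2$ at the gluing radius, the outer function can be essentially $|z_2|^2$, and the normalization is independent of $\delta$. One gets $S_{G_\psi}(p_\delta;(1,0))\gtrsim\psi^{-1}(\delta/2)/\delta\ge\frac12\psi^{-1}(\delta)/\delta$, using that $\psi^{-1}(y)/y$ is decreasing.

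The outer function needs some care. With $\log|z_2|+C_3$ as printed in the paper, $u$ is not even differentiable at $p_\delta$. Taking instead $(2+\eta)\log|z_2|+c$, with $\eta=1/|\log C_1|$ and $c$ an absolute constant, keeps $u=e^{-c}(C_1^2|g|^2+|z_2|^2)$ near $p_\delta$ and $u\le1$.
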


\noindent
{\bf Remarks 1.}

(a) When $\psi(x)=x^2$, the upper bounds for $\wdt \kappa_{G_\psi}$ were already obtained in \cite[Lemma 7]{DNT},
as well as the lower bound for $S_{G_\psi}$ when $\psi(x)=x^{1+\eps}$, $0\le \eps \le 1$ \cite[Corollary 6]{DNT}.

(b) For $M$ any holomorphically contracting metric (with $M_\D(0;1)=1$),
using the fact that $\D \ni \zeta \mapsto (-\delta, \zeta) \in G_\psi$, $M_{G_\psi} (p_\delta ; (0,1))\le 1$.
On the other hand, it follows by inclusion of
domains that $M_{G_{\psi}}\ge M_{\D^2}$, therefore $M_{G_\psi}(p_\delta;(x_N,x_T))\ge\max\{|x_N|, |x_T|\}$.
In particular, $M_{G_\psi} (p_\delta ; (0,1))=1$.

(c) If $\psi_1 \le \psi_2$, $M_{G_{\psi_1}}\ge M_{G_{\psi_2}}$. From this and
Theorem \ref{othermet} applied to $\psi_2(x)=  x$, we deduce that if $\psi_1(x) \gtrsim x$, and $X \in \C^2$,
then
$S_{G_{\psi_1}}(p_\delta;X)$, $\kappa^{(2)}_{G_{\psi_1}}(p_\delta;X)$,
and $ \wdt \kappa_{G_{\psi_1}}(p_\delta;X)$ are all comparable to $|X|$.

(d) The Carath\'eodory-Reiffen metric is the smallest of the holomorphically contracting metrics
with $M_\D(0;1)=1$, so always provides a lower bound to the above quantities.
Recall that, for $p\in \Omega \subset \C^n$ and $X\in \C^n$, the Carath\'eodory-Reiffen metric is given by
\[
\gamma_\Omega (p; X) := \sup\left\{ |Df(p) \cdot X|: f\in \mathcal O (\Omega, \D), f(p)=0\right\}.
\]
If $\psi$ is increasing, by the Hartogs phenomenon, any $f\in \mathcal O (G_\psi, \D)$ extends to a holomorphic function on
$D(0, \sup_{[0;1)}\psi)\times \D$, so $\gamma_\Omega (p_\delta; (x_N,x_T)) \le c( |x_N| + |x_T|)$.  This was already
pointed out in \cite{For-Lee} for instance.
\smallskip

The next proposition sums up some simple facts about the Kobayashi-Royden
metric taken on directions close to the ``tangential'' direction $(0,1)$.

\begin{prop}\label{tangent}

\

\begin{enumerate}
\item
If $\psi(x)\ge c_0 x$ for some $c_0>0$ and $|x_N|\le \min(1,c_0) |x_T|$, then
\[
|x_T|\le \kappa_{G_\psi} (p_\delta; (x_N,x_T))\le
\max \left(1,  \frac{|x_N|}{|x_T|}\cdot\frac{1}{1-\delta}\right) |x_T|.
\]
\item
If $\frac{\psi(x)}{x}$ is a strictly increasing function on $(0;1)$, 
if $0<\delta \le \delta^*$, where $\delta^*$ is the unique solution
of $1-\delta= \frac{\delta}{\psi^{-1}(\delta)}$,
and if 
$|x_N|\le \min \left( 1, \frac{\delta}{\psi^{-1}(\delta)}\right) |x_T|$,
then
$\kappa_{G_\psi}(p_\delta;(x_N,x_T)) = |x_T|$.
\end{enumerate}
\end{prop}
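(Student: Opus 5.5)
For the lower bounds in both parts I will use Remark 1(b): by inclusion $G_\psi\subset\D^2$ we have $\kappa_{G_\psi}\ge\kappa_{\D^2}$, so $\kappa_{G_\psi}(p_\delta;(x_N,x_T))\ge \max\{|x_N|,|x_T|\}\ge |x_T|$. In both parts $|x_N|\le |x_T|$ holds by hypothesis, although only $\ge|x_T|$ is needed. The whole work is therefore in the upper bounds. For these I will exhibit explicit affine discs through $p_\delta$. We may assume $x_T\neq 0$, since otherwise the hypothesis forces $x_N=0$. Set $b:=x_N/x_T$, and for $0<r\le 1$ consider
\[
\varphi_r(\zeta):=(-\delta+br\zeta,\; r\zeta),\qquad \varphi_r(0)=p_\delta,\quad \varphi_r'(0)=\frac{r}{x_T}(x_N,x_T).
\]
If $\varphi_r(\D)\subset G_\psi$, the definition of $\kappa$ gives $\kappa_{G_\psi}(p_\delta;(x_N,x_T))\le |x_T|/r$.

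\emph{Part (1).} Here $|b|\le\min(1,c_0)$. For $|\zeta|<1$ the real part satisfies
\[
\Re(-\delta+br\zeta)\le -\delta+|b|r|\zeta|<c_0 r|\zeta|\le \psi(r|\zeta|).
\]
The strict inequality comes from $-\delta<0$; at $\zeta=0$ it reduces to $-\delta<0=\psi(0)$. The second coordinate clearly lies in $\D$. For the first coordinate we need $|-\delta+br\zeta|\le \delta+|b|r<1$, which holds as soon as $|b|r\le 1-\delta$, because $|\zeta|<1$ makes the inequality strict. If $|b|\le 1-\delta$ I take $r=1$, giving the bound $|x_T|$. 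Otherwise I take $r=(1-\delta)/|b|<1$, giving the bound $|x_T||b|/(1-\delta)=\frac{|x_N|}{|x_T|}\cdot\frac{1}{1-\delta}\,|x_T|$. This yields the stated maximum.

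\emph{Part (2).} Since $\psi(x)/x$ is strictly increasing and positive, $\psi$ is strictly increasing, so $\psi^{-1}$ is well defined. I first check that the function $\delta\mapsto \delta/\psi^{-1}(\delta)$ equals $\psi(t)/t$ at $t=\psi^{-1}(\delta)$, and is therefore increasing in $\delta$. Since $1-\delta$ is decreasing, the solution $\delta^*$ is unique, and for $\delta\le\delta^*$ we get $|b|\le \delta/\psi^{-1}(\delta)\le 1-\delta$. So the disc $\varphi_1$ stays in $\D^2$, exactly as in Part (1).

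The key step, and the only delicate one, is the real-part inequality $-\delta+|b|s<\psi(s)$ for $s=|\zeta|\in[0,1)$. Put $s_0:=\psi^{-1}(\delta)$.
\begin{enumerate}
\item For $s\le s_0$ we have $-\delta+|b|s\le -\delta+\delta s/s_0\le 0$, while $\psi(s)>0$ for $s>0$. At $s=0$ the inequality reads $-\delta<0$.
\item For $s>s_0$, strict monotonicity gives $\psi(s)=s\,\psi(s)/s> s\,\psi(s_0)/s_0=s\delta/s_0\ge |b|s>|b|s-\delta$.
\end{enumerate}
Hence $\varphi_1(\D)\subset G_\psi$, which gives $\kappa_{G_\psi}(p_\delta;(x_N,x_T))\le|x_T|$. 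Combined with the lower bound, this proves equality.
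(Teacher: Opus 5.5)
Your proof is correct and is essentially the paper's argument. In Part (1) you use the same affine discs $\zeta\mapsto(-\delta+\frac{x_N}{x_T}r\zeta,\,r\zeta)$ with $r=1$ or $r=(1-\delta)|x_T|/|x_N|$; in Part (2) you take $r=1$ with the same case split at $|\zeta|=\psi^{-1}(\delta)$, get the lower bounds from $\kappa_{G_\psi}\ge\kappa_{\D^2}$, and also make explicit what the paper leaves implicit, namely that $\delta\le\delta^*$ is exactly what gives $|x_N|/|x_T|\le 1-\delta$, so the disc stays in $\D^2$.
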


Notice that in Part (1), when $|x_N|<|x_T|$ and
$\delta$ is small enough,
$\kappa_{G_\psi} (p_\delta; (x_N,x_T))=|x_T|$.

Part (2) was obtained for the case $\psi(x)=x^2$ in \cite[Lemma 7(1)]{DNT}.

The estimates
for vectors close to the normal direction $(1,0)$ are a considerably more delicate
matter; \cite[Proposition 3]{DNT} shows that the lower estimates cannot be the same as the upper estimates.
See also, for slightly different model domains,
the results of Fu \cite[Proposition 2.3]{Fu2}.

Typical examples for our various situations are provided by the power functions
$\psi (x) = x^\beta$, $\beta>0$. We begin by comparing $\psi(x)$ to $x^{1/2}$.

\begin{prop}
\label{onehalf}

\

\begin{enumerate}
\item
If  $\psi(x)\ge c_0 \sqrt x$  for some $c_0>0$,
then  there exists $C\ge1$  such that
\[
\max(|x_N|,|x_T|) \le \kappa_{G_\psi}(p_\delta;(x_N,x_T))
\le C \max(|x_N|,|x_T|).
\]

\item
If $\frac{\psi(x)}{\sqrt x}$ is an increasing function on $(0;1)$,
there exists $c>0$
such that
\[
c F_2 \left(\delta, \left|\frac{x_T}{x_N}\right|\right)
|x_N| \le \kappa_{G_\psi}(p_\delta;(x_N,x_T)),
\]
where
\[
F_2(\delta,t):= \min \left(\frac{\sqrt{\psi^{-1}(\delta)}}\delta ,
\frac{t}{ \psi(t^2)}
 \right).
\]
\end{enumerate}

\end{prop}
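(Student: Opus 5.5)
\textbf{Part (1).} The lower bound is Remark 1(b): $G_\psi\subset\D^2$, so $\kappa_{G_\psi}(p_\delta;X)\ge\max(|x_N|,|x_T|)$.

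For the upper bound I will build an explicit analytic disc through $p_\delta$ in each direction; by homogeneity I normalize $X$. Since $\sqrt x\ge x$ on $[0,1]$, we have $\psi(x)\ge c_0x$, so Proposition \ref{tangent}(1) already handles the case $|x_N|\le\min(1,c_0)|x_T|$. It remains to treat $|x_T|<\min(1,c_0)^{-1}|x_N|$, and there I normalize $x_N=1$, so that $|x_T|\le K$ for a fixed constant $K$. I will use the disc
\[
\varphi(\zeta)=\bigl(-\delta+r\zeta,\ a\zeta^2+b\zeta\bigr)=\bigl(-\delta+r\zeta,\ a\zeta(\zeta+b/a)\bigr),
\]
with $r>0$, $b=rx_T$, and $a\in\C$ still to be chosen, so that $\varphi(0)=p_\delta$ and $\varphi'(0)=r(1,x_T)$.

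\begin{itemize}
\item \emph{Choice of the phase of $a$.} I pick the argument of $a$ so that $-b/a$ is a non-positive real number. Then for $\Re\zeta\ge0$ we have $|\zeta+b/a|\ge|\zeta|$, hence $|\varphi_2(\zeta)|\ge|a||\zeta|^2$.
\item \emph{Left half-disc.} Where $\Re\zeta<0$, we get $\Re\varphi_1(\zeta)<0\le\psi(|\varphi_2(\zeta)|)$ automatically.
\item \emph{Right half-disc.} Where $\Re\zeta\ge0$, we have $\Re\varphi_1(\zeta)\le r|\zeta|$, while $\psi(|\varphi_2(\zeta)|)\ge c_0|a|^{1/2}|\zeta|$. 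So it suffices that $r\le c_0|a|^{1/2}$.
\item \emph{Staying in $\D^2$.} We need $|a|+|b|<1$ and $r+\delta_0<1$. Take $|a|$ a small fixed constant (so that $|a|<1/2$), then $r$ a constant small enough that $r\le c_0|a|^{1/2}$, $r<1-\delta_0$ and $rK<1/2$; the last condition gives $|b|\le rK<1/2$.
\end{itemize}

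This gives $\kappa_{G_\psi}(p_\delta;(1,x_T))\le1/r$, which is $\le C\max(1,|x_T|)$.

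\textbf{Part (2).} I would argue with a localization and scaling argument. Fix $s\in(0,1]$ and set $A_s:=\psi(s)/\sqrt s$. Since $\psi(x)/\sqrt x$ is increasing, $\psi(|w|)\le A_s\sqrt{|w|}$ for $|w|\le s$. Hence
\[
G_\psi\cap\{|z_2|<s\}\subset H_s:=\{\Re z_1<A_s|z_2|^{1/2}\}.
\]
Let $\varphi\in\mathcal O(\D,G_\psi)$ with $\varphi(0)=p_\delta$ and $\varphi'(0)=\lambda X$. I split into two cases.

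\begin{itemize}
\item \emph{$\varphi_2$ is large somewhere on $\D_{1/2}$}, i.e.\ $\varphi_2$ reaches modulus $s$ there. Since $\varphi_2(0)=0$, the Schwarz lemma applied to $\varphi_2$ only bounds $|\lambda x_T|$ from above, so it does not directly give the needed estimate; I would instead combine it with the constraint coming from the first coordinate. The aim is a bound of the form $|\lambda x_N|\lesssim\psi(|\lambda x_T|^2)/|\lambda x_T|$ after choosing $s\approx|x_T/x_N|^2$.
\item \emph{$\varphi(\D_{1/2})\subset H_s$.} I rescale by $z_1\mapsto z_1/\delta$ and $z_2\mapsto(A_s/\delta)^2z_2$. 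This sends $p_\delta$ to $(-1,0)$ and $H_s$ to the fixed cone $\{\Re z_1<|z_2|^{1/2}\}$. I then need a uniform lower bound for the normal component of $\kappa$ at $(-1,0)$ in the rescaled, still bounded, image. For that I would use the bounded function $\exp(z_1-\text{const})$ together with the restriction $\varphi_1\in\D$. With $s=\psi^{-1}(\delta)$ this should produce the $\sqrt{\psi^{-1}(\delta)}/\delta$ term.
\end{itemize}

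The step I expect to be the main obstacle is the second half of Part (2): getting a scale-invariant lower bound in the rescaled model, and matching the choice of $s$ so that the two regimes combine into exactly $\min\bigl(\sqrt{\psi^{-1}(\delta)}/\delta,\ t/\psi(t^2)\bigr)$. I would try first to treat the rescaled model through $\log$-plurisubharmonic test functions, i.e.\ via the Sibony metric, which is a lower bound for $\kappa$ by \eqref{order}.
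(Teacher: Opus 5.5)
Part (1) is correct and is essentially the paper's argument. The paper also disposes of $|x_T|\gtrsim|x_N|$ via Proposition~\ref{tangent}(1). Otherwise it uses the disc $(-\delta+\zeta,\,x_T\zeta+a\zeta^2)$, restricted to a small disc, with $a$ having the phase of $x_T$; this is exactly your device for getting $|\varphi_2(\zeta)|\ge|a||\zeta|^2$ on $\Re\zeta\ge0$.

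Part (2), however, is not proved, and the framework you set up does not lead to a proof. You split according to whether $|\varphi_2|$ reaches $s$ on the fixed disc $\D_{1/2}$. But you use $s\approx|x_T/x_N|^2$ in one case and $s=\psi^{-1}(\delta)$ in the other, so the two cases need not even cover all discs. Your second case ($|\varphi_2|<s=\psi^{-1}(\delta)$ on $\D_{1/2}$) is elementary: there $\Re\varphi_1<\psi(s)=\delta$, and the Schwarz lemma for the half-plane $\{\Re w<\delta\}$ gives $\lambda\le 8\delta$, with no rescaling or Sibony functions needed. But $8\delta$ is far below the target bound $\lambda\lesssim 1/F_2(\delta,|x_T|)$, which is always at least $\delta/\sqrt{\psi^{-1}(\delta)}\gg\delta$ for small $\delta$. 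So all the difficulty sits in your first case, for which you only state an aim.

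That aim cannot hold on the whole first case. Take $\psi(x)=x$ (which satisfies the hypothesis), $x_N=1$, and the disc $\zeta\mapsto(-\delta+\lambda\zeta,\ \lambda x_T\zeta+\zeta^2/2)$ with $\lambda=\frac12\min(\delta,1-\delta)$.
\begin{itemize}
\item It lies in $G_\psi$: it stays in $\D^2$ and its first coordinate has negative real part.
\item $|\varphi_2|$ comes close to $\frac18\gg|x_T|^2$ on $\D_{1/2}$, so it is in your first case.
\item Yet $\lambda$ is of order $\delta$. Once $|x_T|\ll\delta$ this is much larger than $\psi(|\lambda x_T|^2)/|\lambda x_T|=\lambda|x_T|$, and also larger than $\psi(|x_T|^2)/|x_T|=|x_T|$ if that is what you meant.
\end{itemize}

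The missing idea is to shrink the disc rather than threshold $\varphi_2$ on a fixed one. You dismissed the Schwarz lemma for $\varphi_2$ as only bounding $|\lambda x_T|$. Its Schwarz--Pick form for $h(\zeta):=\varphi_2(\zeta)/\zeta$, with $h(\D)\subset\D$ and $h(0)=\lambda x_T$, gives $|\varphi_2(\zeta)|\le|\zeta|(\lambda|x_T|+|\zeta|)$. Hence on $D(0,r)$ one has $\Re\varphi_1<M:=\psi(r(\lambda|x_T|+r))$, since $\psi$ is increasing. The Schwarz lemma for $\zeta\mapsto\varphi_1(r\zeta)$ into $\{\Re w<M\}$ then yields $\lambda\le 2(M+\delta)/r$; this is Lemma~\ref{lowlem}. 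Now use $\lambda\le1$ and choose $r$ according to $|x_T|$:
\begin{itemize}
\item If $|x_T|\le\sqrt{\psi^{-1}(\delta)}$, take $r=\frac12\sqrt{\psi^{-1}(\delta)}$. Then $M\le\delta$ and $\lambda\le 8\delta/\sqrt{\psi^{-1}(\delta)}$.
\item If $|x_T|\ge\sqrt{\psi^{-1}(\delta)}$, take $r=\frac12|x_T|$. Then $M\le\psi(|x_T|^2)$ and $\delta\le\psi(|x_T|^2)$, so $\lambda\le 8\psi(|x_T|^2)/|x_T|$.
\end{itemize}
Either way $\kappa_{G_\psi}(p_\delta;(1,x_T))$ is at least $\frac18$ times one of the two terms of $F_2$, which is the claim with $c=\frac18$. (It is in fact the smaller term, since $\psi(x^2)/x$ is increasing.) The factor $\sqrt{\psi^{-1}(\delta)}/\delta$ therefore comes from the radius of the disc on which Schwarz--Pick forces $|\varphi_2|\le\psi^{-1}(\delta)$, not from a scale-invariant estimate on a rescaled model.
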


\noindent
{\bf Remarks 2.}

(a) The paradigm for Part (1) is given by $\psi(x)= \sqrt x$
and $G_\psi$ then looks like a bidisc with a thin cuspidal wedge removed.
The result suggests that such inner cusps are ``too thin
to be seen'' by the Kobayashi-Royden metric at
points within the axis of the cusp.  On the other hand, when $\frac{\psi(x)}{\sqrt x}$ is
increasing, then $\psi(x) \le \psi(1) \sqrt x$.

(b) Part (2) generalizes to vectors close to the normal direction
the lower bound that had been obtained in \cite[Lemma 8]{DNT},
in the case $c_0 \le \left|\frac{x_T}{x_N} \right| \le  \delta^{\frac1\beta - 1}$,
for the  domain
\[
\Omega_\beta :=\left\{ \Re z_1 < |z_2|^\beta + |\Im z_1|^\beta \right\}, \quad \beta >1.
\]

(c) Theorem \ref{main} (1) gives a sharper result than Proposition \ref{onehalf} (2), under
an additional hypothesis about the growth of $\psi$.

\begin{thm}\label{main}

\

\begin{enumerate}
\item
Let $\frac{\psi(x)}{\sqrt x}$ be an increasing function
and let $x\mapsto \psi_1(x):=\frac{\psi(x)}{x}$ be a decreasing function on $(0;1)$,
with a ``halving'' property:

\noindent(H) there exists $K>1$ such that for any $x\in (0;1/K]$, $\psi_1(Kx)\le \frac12 \psi_1(x)$.

\noindent Then there exist $0<c<C$ such that, if $|x_T|\le|x_N|$,
\[
c F_3 \left(\delta,\left|\frac{x_T}{x_N}\right|\right)  |x_N| \le \kappa_{G_\psi}(p_\delta;(x_N,x_T))
\le
C F_3 \left(\delta,\left|\frac{x_T}{x_N}\right|\right)  |x_N|,
\]
where
\[
F_3(\delta,t):= \min \left(\frac{\sqrt{\psi^{-1}(\delta)}}{\delta} ,
\frac{8 t} {\sqrt{\psi_1^{-1}\left(\frac{1}{8t}\right)} } \right).
\]

\item
Let $\psi(x)=x$. Then there exist $0<c<C$ such that for any $(x_N,x_T)\in \C^2$,
\[
c \max\left( \frac{|x_N|-|x_T|}{\sqrt \delta}, |x_T| \right)
\le 
\kappa_{G_\psi}(p_\delta;(x_N,x_T))
\le
C \max\left( \frac{|x_N|-|x_T|}{\sqrt \delta}, |x_T| \right).
\]

\item
If $\frac{\psi(x)}{x}$ is an increasing function on $(0;1)$, then
there exist $C>0$ such that if $0<\delta<\psi(1)$, and
if $|x_T| \le\frac{\psi^{-1}(\delta)}{8\delta}|x_N|$,
\[
\frac1{4\sqrt 2} \frac{\sqrt{\psi^{-1}(\delta)}}{\delta}|x_N|
\le
\kappa_{G_\psi}(p_\delta;(x_N,x_T)) \le
C  \frac{\sqrt{\psi^{-1}(\delta)}}{\delta}|x_N|.
\]
\end{enumerate}
\end{thm}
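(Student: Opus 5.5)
We treat the three parts separately. In each, the upper bound comes from an explicit analytic disc through $p_\delta$ and the lower bound from a Schwarz--Pick plus Borel--Carath\'eodory argument applied to the two components of an arbitrary competing disc. We do part (3) first, since it fixes the mechanism.

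\emph{Part (3), lower bound.} Let $\varphi=(\varphi_1,\varphi_2)\in\mathcal O(\D,G_\psi)$ with $\varphi(0)=p_\delta$ and $\varphi'(0)=\lambda(x_N,x_T)$. Set $r:=\psi^{-1}(\delta)$ and $\rho:=\sqrt{r/2}$. Restricting $\varphi$ to a smaller disc and rescaling lowers $\lambda$, so arguing by contradiction we may assume $\lambda|x_N|=4\sqrt2\,\delta/\sqrt r$. The hypothesis $|x_T|\le \frac{r}{8\delta}|x_N|$ then gives
\[
a:=|\varphi_2'(0)|\le \rho .
\]
Since $\varphi_2(0)=0$, Schwarz--Pick gives $|\varphi_2(\zeta)|\le|\zeta|\frac{|\zeta|+a}{1+a|\zeta|}\le 2\rho^2=r$ for $|\zeta|\le\rho$. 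Hence on $D(0,\rho)$ we have $\Re\varphi_1<\psi(|\varphi_2|)\le\delta$, while $\varphi_1(0)=-\delta$. Applying the Schwarz lemma for the half-plane $\{\Re w<\delta\}$ on $D(0,\rho)$ gives $|\varphi_1'(0)|\le 4\delta/\rho$, that is, $\lambda|x_N|\le 4\sqrt2\,\delta/\sqrt r$ with strict inequality. This contradicts the normalization and yields the constant $\frac1{4\sqrt2}$ of the statement.

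\emph{Part (3), upper bound.} We use the disc
\[
\zeta\mapsto\bigl(-\delta+\lambda x_N\zeta,\ s\zeta^2+\eps\zeta\bigr).
\]
Put $t_0=\delta/(\lambda|x_N|)$. For $|\zeta|\le t_0$ the first component has $\Re z_1\le 0$, so the disc stays in $G_\psi$ there. For $|\zeta|\ge t_0$, the monotonicity of $\psi(x)/x$ gives $\psi(s|\zeta|^2)\ge (|\zeta|/t_0)^2\psi(st_0^2)$. Choosing $st_0^2=r$ (with $s<1$ fixed) makes this at least $\delta|\zeta|/t_0\ge\Re z_1$. This produces $\lambda|x_N|\approx\delta/\sqrt r$. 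The small term $\eps\zeta$ with $\eps=\lambda x_T$ is there to realize the tangential component; the bound $|x_T|\lesssim\frac r\delta|x_N|$ keeps $|z_2|$ comparable to $s|\zeta|^2$ where it matters.

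\emph{Part (1).} We follow the same scheme, but now the tangential component is not negligible. The Schwarz--Pick bound gives $|\varphi_2(\zeta)|\lesssim|\zeta|(|\zeta|+a)$ with $a=\lambda|x_T|$. When $a$ dominates, the relevant disc radius $\rho$ is the one for which $\rho\,\psi_1(a\rho)\cdot a\approx\delta$-scale quantities balance, which is where $\psi_1^{-1}(1/8t)$ enters. Hypothesis (H) is used to compare $\psi(C x)$ with $\psi(x)$ up to constants, so that the factor-of-two losses from Schwarz--Pick do not spoil the estimate. The upper bound uses a disc whose second component is of the form $\lambda x_T\zeta+s\zeta^2$, with $s$ tuned in the same two regimes, and whose first component is $-\delta+\lambda x_N\zeta$. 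The minimum in $F_3$ records which of the two terms of $\varphi_2$ controls $\psi(|\varphi_2|)$.

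\emph{Part (2)}, where $\psi(x)=x$, is the borderline case. For the lower bound, $|x_T|$ comes from Remark 1(b), and $(|x_N|-|x_T|)/\sqrt\delta$ comes from the Borel--Carath\'eodory argument applied to $\varphi_1-\varphi_2$ (or $\varphi_1-e^{i\theta}\varphi_2$), using $\Re\varphi_1<|\varphi_2|$. For the upper bound we use discs $\zeta\mapsto(-\delta+\lambda x_N\zeta,\ \lambda x_T\zeta+s\zeta^2)$.

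We expect part (1) to be the main obstacle. There one must track the interplay of $a$ and $|\zeta|$ in the Schwarz--Pick bound precisely enough to get matching upper and lower bounds, and (H) must be invoked at the right scale.
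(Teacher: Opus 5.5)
Your part (3) is fine, and it is essentially the paper's argument: Schwarz--Pick for $\varphi_2$ and the half-plane Schwarz lemma for $\varphi_1$. The paper takes the radius $r=4\delta/\lambda$ in Lemma~\ref{lowlem} instead of normalizing $\lambda$, and uses the same kind of disc for the upper bound. Your part (3) lower bound also covers part (1) when $t:=|x_T/x_N|\le\frac{\psi^{-1}(\delta)}{8\delta}$, because it only uses that $\psi$ increases. Your disc check must be redone there with $\psi(x)/\sqrt x$ increasing, since $\psi(x)/x$ now decreases; it still works, giving $\psi(s|\zeta|^2)\ge\delta|\zeta|/t_0$.

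The genuine gap in part (1) is the range $\frac{\psi^{-1}(\delta)}{8\delta}\le t\le 1$. There you give no radius and no inequality, and you misplace (H). Comparing $\psi(Cx)$ with $\psi(x)$ does not need (H): $\psi(x)\le\psi(Cx)\le C\psi(x)$ for $C\ge1$ already follows from $\psi$ increasing and $\psi_1$ decreasing. What (H) supplies is the \emph{inverse} bound $\psi_1^{-1}(u/2^m)\le K^m\psi_1^{-1}(u)$, which is needed wherever constants end up inside $\psi_1^{-1}$. The paper's lower bound runs as follows.
\begin{itemize}
\item Take $r=\sqrt{\psi_1^{-1}(1/t)}$. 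If $\lambda t\le r$ you are done.
\item Otherwise Lemma~\ref{lowlem} gives $\lambda r\le 2(\delta+\psi(2r\lambda t))$.
\item From $t\ge\frac{\psi^{-1}(\delta)}{8\delta}$ and (H), $\delta\le\psi(\psi_1^{-1}(\frac1{8t}))\le\psi(K^3r^2)\lesssim\psi(2r\lambda t)$.
\item Hence $\psi_1(2r\lambda t)\ge c\,\psi_1(r^2)$, which (H) inverts to $2r\lambda t\le K^m r^2$.
\end{itemize}
The matching disc is $(-\delta+\lambda\zeta,\ \lambda x_T\zeta+\frac{x_T}{|x_T|}\zeta^2)$ with $\psi_1((\lambda t)^2)=1/t$, and $\delta$ is simply discarded. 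For $\Re\zeta>0$ one has $|\varphi_2|\ge|\zeta|\max(|\zeta|,\lambda t)$. One then checks $\lambda|\zeta|\le\psi(|\varphi_2|)$ for $|\zeta|\le\lambda t$ using $\psi_1$ decreasing, and for $|\zeta|\ge\lambda t$ using $\psi(x)/\sqrt x$ increasing. (H) is used once more to convert $\psi_1^{-1}(1/t)$ into the $\psi_1^{-1}(1/8t)$ appearing in $F_3$.

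In part (2), the lower bound $(|x_N|-|x_T|)/\sqrt\delta$ cannot come from a Borel--Carath\'eodory (or half-plane Schwarz) estimate on a disc for $h=\varphi_1-e^{i\theta}\varphi_2$ with $e^{i\theta}x_T=|x_T|$. On $D(0,\rho)$ the only one-sided information is $\Re h<|\varphi_2|-\Re(e^{i\theta}\varphi_2)$. This is of size $\lambda|x_T|\rho$ away from the positive real axis, so you only get $\lambda(1-|x_T|)\lesssim\delta/\rho+\rho+\lambda|x_T|$, which is empty as $|x_T|\to|x_N|$. The cancellation $|\varphi_2|-\Re(e^{i\theta}\varphi_2)=O(|\zeta|^2)$ holds only along that ray, so the argument must be pointwise, as in the paper:
\begin{itemize}
\item write $\varphi_1=-\delta+\lambda\zeta+\zeta^2\tilde\varphi_1$ with $|\tilde\varphi_1|\le3$ by the maximum principle;
\item use $|\varphi_2(\zeta)|\le\lambda|x_T||\zeta|+|\zeta|^2$ from Schwarz--Pick;
\item evaluate $\Re\varphi_1<|\varphi_2|$ at the single point $\zeta=\sqrt\delta$, which gives $\lambda(1-|x_T|)<5\sqrt\delta$.
\end{itemize}
Your upper-bound family for part (2) is the right one, but you have not verified it. Normalize $x_N=1$, rotate $z_2$ so that $x_T\ge0$, and take $s>0$. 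For $x=\Re\zeta\ge0$ one has $|\varphi_2|\ge x(\lambda x_T+sx)$, so it suffices that $sx^2-\lambda(1-x_T)x+\delta>0$, i.e. $\lambda^2(1-x_T)^2<4s\delta$. For $x_T<1$, the choice $\lambda=\min(c,\sqrt{s\delta}/(1-x_T))$ then gives the bound, and it treats at once the two cases the paper separates.
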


Typical examples are provided by the power functions
$\psi (x) = x^\beta$.
For the reader's convenience, we collect the consequences of Propositions \ref{tangent} and Theorem \ref{main}
for the power functions in the corollary below; the case $\beta=1$
is already dealt with in Theorem \ref{main} (2).

\begin{cor}
\label{power}
Let $\psi (x) = x^\beta$.  Then there exist $0<c<C$, depending on $\beta$, such that:
\begin{enumerate}
\item
If $0<\beta \le \frac12$, $\max(|x_N|,|x_T|) \le \kappa_{G_\psi}(p_\delta;(x_N,x_T))\le C \max(|x_N|,|x_T|)$.
\item
If $\frac12 < \beta <1$, then:
\begin{itemize}
\item
if $0\le |x_T| \le \delta^{\frac1\beta-1} |x_N|$, then $c \delta^{\frac1{2\beta}-1} |x_N| 
\le \kappa_{G_\psi}(p_\delta;(x_N,x_T)) \le C \delta^{\frac1{2\beta}-1} |x_N|$;
\item
if $ \delta^{\frac1\beta-1} |x_N| \le |x_T| \le |x_N| $, then 
\\
$c \left|\frac{x_T}{x_N}\right|^{1-\frac2{1-\beta}} |x_N|  \le \kappa_{G_\psi}(p_\delta;(x_N,x_T)) \le C \left|\frac{x_T}{x_N}\right|^{1-\frac2{1-\beta}} |x_N| $;
\item
if $ |x_N| \le |x_T| \le \frac1{1-\delta} |x_N| $, then $|x_T| \le \kappa_{G_\psi}(p_\delta;(x_N,x_T)) \le  \frac1{1-\delta} |x_N|$;
\item
if $\frac1{1-\delta}  |x_N| \le |x_T| $, then $ \kappa_{G_\psi}(p_\delta;(x_N,x_T))=|x_T| $.
\end{itemize}
\item
If $1< \beta $, then:
\begin{itemize}
\item
if $|x_T| \le\frac18 \delta^{\frac1\beta-1}|x_N|$, then 
\\
$\frac1{4\sqrt 2} \delta^{\frac1{2\beta}-1}|x_N|
\le
\kappa_{G_\psi}(p_\delta;(x_N,x_T)) \le
C  \delta^{\frac1{2\beta}-1}|x_N|$;
\item
if $ \delta^{\frac1\beta-1}|x_N| \le |x_T| $, and  if $0<\delta \le \delta^*$, where $\delta^*$ is the unique solution
of $1-\delta= \delta^{1-\frac1\beta}$,
then
$ \kappa_{G_\psi}(p_\delta;(x_N,x_T))=|x_T| $.
\end{itemize}
\end{enumerate}
\end{cor}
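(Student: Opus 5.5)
The plan is to derive each item of Corollary \ref{power} by specializing Propositions \ref{tangent}, \ref{onehalf} and Theorem \ref{main} to $\psi(x)=x^\beta$. Throughout, $\psi^{-1}(\delta)=\delta^{1/\beta}$, $\psi_1(x)=x^{\beta-1}$, and $\psi(x)/x=x^{\beta-1}$ is strictly increasing exactly when $\beta>1$.

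\emph{Case $0<\beta\le\frac12$.} Here $x^\beta\ge \sqrt x$ on $[0,1]$, so Proposition \ref{onehalf}(1) applies with $c_0=1$ and gives item (1) directly.

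\emph{Case $\frac12<\beta<1$.} We apply Theorem \ref{main}(1). The map $\psi(x)/\sqrt x=x^{\beta-1/2}$ is increasing and $\psi_1(x)=x^{\beta-1}$ is decreasing. The halving property (H) holds with $K=2^{1/(1-\beta)}$, since $\psi_1(Kx)=K^{\beta-1}\psi_1(x)=\frac12\psi_1(x)$.

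Next we compute $F_3$. The first term is $\sqrt{\psi^{-1}(\delta)}/\delta=\delta^{\frac1{2\beta}-1}$. For the second term, $\psi_1^{-1}(s)=s^{-1/(1-\beta)}$, so
\[
\psi_1^{-1}\left(\tfrac1{8t}\right)=(8t)^{1/(1-\beta)},
\]
and therefore the second term equals $(8t)^{1-\frac1{2(1-\beta)}}$, with $t=|x_T/x_N|\le1$. The exponent $1-\frac1{2(1-\beta)}$ is negative, so this term is comparable to $t^{1-\frac1{2(1-\beta)}}$ up to constants depending on $\beta$. The two terms coincide when
\[
t^{\frac{1-2\beta}{2(1-\beta)}}=\delta^{\frac{1-2\beta}{2\beta}},
\]
that is, when $t=\delta^{\frac{1-\beta}{\beta}}=\delta^{\frac1\beta-1}$. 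Because the exponent is negative, the minimum is the $\delta$-term for $t\le\delta^{1/\beta-1}$ and the $t$-term otherwise. This gives the first two bullets, up to constants. I expect the stated exponent $1-\frac2{1-\beta}$ should read $1-\frac1{2(1-\beta)}$, and I would check this against the source; the computation itself is routine.

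The third and fourth bullets come from Proposition \ref{tangent}(1) with $c_0=1$, since $x^\beta\ge x$. For $|x_N|\le|x_T|$ the upper bound is $\max\left(|x_T|,\frac{|x_N|}{1-\delta}\right)$. This equals $|x_T|$ once $|x_T|\ge|x_N|/(1-\delta)$, and is at most $\frac{|x_N|}{1-\delta}$ in the intermediate range. The lower bound $|x_T|$ is Remark 1(b).

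\emph{Case $\beta>1$.} Here $\psi(x)/x$ is strictly increasing. The first bullet is Theorem \ref{main}(3), using $\psi^{-1}(\delta)/\delta=\delta^{\frac1\beta-1}$ and noting that the condition $\delta<\psi(1)=1$ holds. The second bullet is Proposition \ref{tangent}(2): its hypothesis $|x_N|\le\min(1,\delta^{1-1/\beta})|x_T|$ becomes $|x_N|\le\delta^{1-\frac1\beta}|x_T|$ because $\delta<1$, and this is exactly $\delta^{\frac1\beta-1}|x_N|\le|x_T|$. The value $\delta^*$ is the one defined there.

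The only real obstacle is the exponent bookkeeping in $F_3$ for $\frac12<\beta<1$, in particular tracking the sign of the exponent so as to identify which term realizes the minimum.
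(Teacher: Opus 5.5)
Your proposal is correct and matches how the paper obtains the corollary: it specializes Proposition \ref{onehalf}(1), Theorem \ref{main}(1), Proposition \ref{tangent}(1),(2) and Theorem \ref{main}(3) to $\psi(x)=x^\beta$, and the paper gives no further argument. You are also right about the exponent. The paper's own Remark 3(d), which gives $\kappa_{G_\psi}(p_\delta;(1,\delta^\gamma))\asymp\delta^{-\gamma\frac{2\beta-1}{2(1-\beta)}}$, confirms that the second bullet of (2) should read $\left|\frac{x_T}{x_N}\right|^{1-\frac{1}{2(1-\beta)}}|x_N|$. The printed exponent $1-\frac{2}{1-\beta}$ is a typo; it would not even match the first bullet at $|x_T|=\delta^{\frac1\beta-1}|x_N|$.
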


{\bf Remarks 3.}

(a) A more precise result than ours for the case $\beta=2$ was obtained in \cite[Lemma 7, (1)]{DNT},
but Proposition \ref{onehalf}  (2) and
Theorem \ref{main} (3) generalize \cite[Lemma 7, (2), (3)]{DNT}.

(b) In Part (2), one could have considered $\psi(x)=c_0 x$.  However, we can reduce ourselves to
$c_0=1$ by a linear change of coordinates and using a localization lemma for the Kobayashi-Royden metric, 
at the cost of changing the multiplicative constants.  We omit the details. 

(c) There is a gap between the ranges
$\ds  8 \le  \frac{\psi^{-1}(\delta)}{\delta} \frac{|x_N|}{|x_T|}$
from Part (3) and
$\ds \frac{\psi^{-1}(\delta)}{\delta} \frac{|x_N|}{|x_T|} \le 1$ from Proposition \ref{tangent} (2),
and it is essential, since
the behavior of the Kobayashi-Royden changes radically between those two. This generalizes \cite[Proposition 3 (2)]{DNT}.
We do not know the precise behavior of the metric in the intermediate range.

(d) The reader might be surprised that
in Part (1), when
$ |x_T|>\frac{\psi^{-1}(\delta)}{8\delta} |x_N| $,
then the bounds do not depend explicitly
on $\delta$.  For instance, when $|x_N| =|x_T|$, $F_3$ becomes constant. 
This is not so surprising
if we recall that the metric $\kappa_{G_\psi} (p_\delta,X)$
depends on two variables, the point $p_\delta$
and the vector $X$.

One can reduce this to a one-variable situation in the following way: if
vectors $(x_N(\delta), x_T(\delta))$ are given, then
$\kappa_{G_\psi}\left(p_\delta;(x_N(\delta), x_T(\delta))\right)$ will depend
on the behavior of $|x_T(\delta)|/|x_N(\delta)| $
as $\delta\to0$.  If it tends to $0$ faster than
$\frac{\psi^{-1}(\delta)}{\delta}  $, then the metric
will behave like $\frac{\sqrt{\psi^{-1}(\delta)}}{\delta} |x_N|$;
if not, it will be some function depending on how slowly
$|x_T(\delta)|/|x_N(\delta)| $ goes to $0$, if it does,
and bounded below if it does not.

If we apply this to the example of the function
$\psi(x)=x^\beta$, with $\beta\in(\frac12;1)$,
and a family of vectors given by $X(\delta)=
(1,\delta^\gamma)$, with $\gamma>0$, then
if $\gamma \in (\frac{1}{\beta}-1,\infty)$,
applying part (1)
we have
\[
c \delta^{\frac{1}{2\beta}-1} \le
\kappa_{G_\psi}(p_\delta;X(\delta)) \le C
\delta^{\frac{1}{2\beta}-1}.
\]
Parts (2) and (3) show that the same upper bound
holds for all $\beta\ge \frac12$.

On the other hand, if $\gamma \in (0,\frac{1}{\beta}-1]$,
we see that the rate of blow-up depends on $\gamma$
(notice that the exponent is a linear interpolation between the two extreme cases):
\[
c \delta^{-\gamma \frac{2\beta-1}{2(1-\beta)}}
\le
\kappa_{G_\psi}(p_\delta;X(\delta)) \le
C \delta^{-\gamma \frac{2\beta-1}{2(1-\beta)}}.
\]
Finally, the case of a constant vector with
non-zero $x_T$-component (so $\gamma=0$)
would yield a bounded estimate.
\vskip.3cm

The proof of Theorem \ref{othermet} is given in Section \ref{other},
of Propositions \ref{tangent} and \ref{onehalf} in Section \ref{props},
and of Theorem \ref{main} in Section \ref{mainpf}.

\section{Proof of Theorem \ref{othermet}}
\label{other}

\subsection{Upper estimate for $\kappa^{(2)}_{G_\psi}$}

Consider the map given, for $\zeta\in \D$, by
\[
\varphi(\zeta)= \left(-\delta+ \frac{\delta}{\psi^{-1}(\delta)} \zeta, \zeta\right).
\]
The fact that $\Re(\varphi_1(\zeta)) < \psi(|\varphi_2(\zeta)|) $ is clear when $|\zeta|\le \psi^{-1}(\delta)$; otherwise, 
use $\ds \frac{\delta}{\psi^{-1}(\delta)} \le \frac{\psi(|\zeta|)}{|\zeta|}$, which once again yields the inequality. 
Since $\lim_{\delta\to0} \frac{\delta}{\psi^{-1}(\delta)}=0$, 
$\varphi(\zeta)\in G_\psi$ for all $\zeta\in\D$ when $\delta$ is small enough.

So $\kappa_{G_\psi}(p_\delta;(1, \frac{\psi^{-1}(\delta)}{\delta})) \le \frac{\psi^{-1}(\delta)}{\delta} $.

We saw in Remark 1(b) that $\kappa_{G_\psi}(p_\delta;(0,1)) \le 1$.

Since we have a Finsler metric, it is enough to obtain the desired estimate for $x_N=1$, and
\[
\kappa^{(2)}_{G_\psi}(p_\delta;(1,x_T))  \le
\kappa_{G_\psi}(p_\delta;(1, \frac{\psi^{-1}(\delta)}{\delta})) +
\left| x_T - \frac{\psi^{-1}(\delta)}{\delta} \right| \kappa_{G_\psi}(p_\delta;(0,1))
\le 2 \frac{\psi^{-1}(\delta)}{\delta} + |x_T|.
\]

\subsection{Upper estimate for $\wdt \kappa_{G_\psi}$}

The  estimate will be established if we show that
$D(0,  \frac{\delta}{\psi^{-1}(\delta)} ) \times \D \subset I_z \wdt \kappa_{G_\psi}$.
By the Hartogs phenomenon, this will be true if
$D(0,  \frac{\delta}{\psi^{-1}(\delta)} ) \times \partial \D \subset \cl{I_z \kappa_{G_\psi}}$.

For $|a|<\frac{\delta}{\psi^{-1}(\delta)}$, $\theta \in \R$, consider the map given by
\[
\D \ni \zeta\mapsto \varphi(\zeta)= \left(-\delta+ a \zeta, e^{i\theta} \zeta\right).
\]
Since $\Re (\varphi_1(\zeta)) \le \frac{\delta}{\psi^{-1}(\delta)} |\zeta|$
and $|\varphi_2(\zeta)|= |\zeta|$, seeing that $\varphi(\zeta)\in G_\psi$ reduces to
verifying \eqref{suffx} with $x=|\zeta|$.

\subsection{Lower estimate for $S_{G_\psi}$}

Since the Sibony metric verifies the triangle inequality, it will be enough to estimate it on
each of the basis vectors. The estimate for $(0,1)$ was given in Remark 1(b).

To estimate $S_{G_\psi} (p_\delta ; (1,0))$ from below, we construct a function $u$ that will be a candidate for
the supremum in the definition of the Sibony metric.

Let $C_1:= \psi^{-1}(\delta/2)$, $C_2:=C_1^2$. We define
\[
f(\zeta):= C_2 \left| \frac{\zeta+\delta}{\zeta-\delta}\right|^2,
\mbox{ then }\frac{\partial^2 f}{\partial \zeta  \partial \bar \zeta}(-\delta) = \frac{C_2}{4\delta^2},
\]
and then $u:= e^v$, with
\[
v(z_1,z_2):=
\begin{cases}
\max \left( \log(f(z_1) + |z_2|^2), \log|z_2| + C_3 \right)-C_4, \mbox{ for } |z_2| \le C_1,
\\
\log |z_2| +C_3 - C_4, \mbox{ for } |z_2| \ge C_1 .
\end{cases}
\]
When $|z_2| \le C_1,$ $z\in G_\psi$, then $\Re z_1 \le \psi (C_1) = \delta/2$.  When $\Re \zeta \le \delta/2$,
$\left| \frac{\zeta+\delta}{\zeta-\delta}\right|^2$ is maximal for $\Re \zeta = \delta/2$, and for $y\in \R$,
\[
\left| \frac{\frac32 \delta+iy}{-\frac12 \delta +iy}\right| = \left| \frac{ \delta}{\frac12 \delta -iy}+ \frac{\frac12 \delta+iy}{\frac12 \delta -iy}\right| \le 2 + 1.
\]
Hence for $|z_2|= C_1$, $f(z_1) + |z_2|^2 \le (9 + 1) C_1^2$, and if we take $C_3 > \log(10 C_1)$,
\newline
$\log(f(z_1) + |z_2|^2) < C_3+\log |z_2|$,
and this inequality holds in a neighborhood.
This implies that $v$ is plurisubharmonic.  Finally we choose $C_4\ge C_3+\log |C_1|$, so that $v\le 0$ on $G_\psi$.

For $\Re z_1<0$, $u(z_1,0)= f(z_1)$, so $\frac{\partial^2 u}{\partial z_1 \partial \bar z_1}(p_\delta) = \frac{\psi^{-1}(\delta/2)^2}{4\delta^2}$;
passing to the square root, we see that $S_{G_\psi} (p_\delta ; (1,0)) \ge \frac{\psi^{-1}(\delta/2)}{2\delta}$.

It remains to see that $\psi^{-1}(\delta ) \le C \psi^{-1}(\delta/2)$.  Observe that to have this, it is
enough to assume that $\psi(x)/x^\gamma$ is increasing for some $\gamma >0$.  Indeed, since $\psi^{-1}$ is an increasing
function,
\[
\frac{\psi^{-1}(x)}{x^{1/\gamma}} = \left( \frac{\psi^{-1}(x)^\gamma}{\psi(\psi^{-1}(x))} \right)^{1/\gamma}
\]
must be decreasing. So
\[
\psi^{-1}(\delta/2) = (\delta/2)^{1/\gamma} \frac{\psi^{-1}(\delta/2)}{(\delta/2)^{1/\gamma}} \ge
(\delta/2)^{1/\gamma} \frac{\psi^{-1}(\delta)}{\delta^{1/\gamma}} = 2^{-1/\gamma}\psi^{-1}(\delta).
\]

\section{Proofs of Propositions \ref{tangent} and \ref{onehalf}}
\label{props}

\subsection{Proof of Proposition \ref{tangent}}

Using the fact that
$\kappa_{G_\psi}(p_\delta;(x_N,x_T))=
|x_T|\kappa_{G_\psi}(p_\delta;(\frac{x_N}{x_T},1))$, we may assume $x_T=1$.

The lower estimate in Part (1) follows from Remark 1(b). For the upper
estimate, it is enough to consider $\psi(x)=c_0 x$ by inclusion of domains.

When $\delta \le 1-|x_N|$, $\max \left(1,  |x_N|(1-\delta)^{-1}  \right)= 1$.
Consider the map
\[
\varphi(\zeta):=
\left( -\delta + x_N \zeta , \zeta \right).
\]
The condition on $x_N$ ensures $\varphi(\D)\subset \D^2$.
For $\Re \zeta \le 0$, we  have $\Re\varphi_1(\zeta)<0$ so $\varphi(\zeta)\in G_\psi$,
for $\Re \zeta > 0$, $|\varphi_2(\zeta)|=|\zeta|> \Re\varphi_1(\zeta)$.

When $\delta > 1-|x_N|$, consider
\[
\varphi(\zeta):=
\left( -\delta + \lambda x_N \zeta , \lambda \zeta \right), \mbox{ with } \lambda = \frac{1-\delta}{|x_N|}<1.
\]
Again it is easy to check $\varphi(\D)\subset \D^2$ and for $\Re \zeta >0$,
$|\varphi_2(\zeta)|=\lambda|\zeta|\ge \lambda |  x_N \zeta|>\Re\varphi_1(\zeta)$.

In Part (2), the
lower estimate again holds by Remark 1(b), while the upper estimate
follows from considering the map $\varphi(\zeta):= p_\delta+ \zeta (x_N,1)$:
again the condition on $x_N$ ensures $\varphi(\D)\subset \D^2$. On the other hand,
$\Re \varphi_1(\zeta)\le -\delta + |\zeta| \frac{\delta}{\psi^{-1}(\delta)}$,
so when $|\zeta|<\psi^{-1}(\delta)$, $\Re \varphi_1(\zeta)<0$, and
when $|\zeta|\ge \psi^{-1}(\delta)$,  $\Re \varphi_1(\zeta) \le -\delta
+ \psi(|\zeta|) =  -\delta
+ \psi(|\varphi_2(\zeta)|)$.

\subsection{Proof of Proposition \ref{onehalf}}

The lower estimate in Part (1) follows from Remark 1(b).
For the  upper estimate,
by inclusion, it is enough to prove it for $\psi(x)=c_0x^{1/2}$.
Proposition \ref{tangent} (1) takes care of the case $|x_T|>\max(1,c_0^{-1})|x_N|$.

Using the fact that
$\kappa_{G_\psi}(p_\delta;(x_N,x_T))=
|x_N|\kappa_{G_\psi}(p_\delta;(1,\frac{x_T}{x_N}))$, we may assume $x_N=1$.

Define
\[
\varphi(\zeta):=(-\delta+\zeta,
 x_T \zeta + \frac{x_T}{c_0^2 |x_T|}\zeta^2),
\]
with the convention that $\frac{x_T}{ |x_T|}=1$ when $x_T=0$.

Since we may assume $|x_T|\le \frac1{c_0}$, we see that $\varphi(\zeta)\in\D^2$ when
$|\zeta|\le \min(1-\delta_0, c_0\frac{-1+\sqrt5}2)$.
When $\Re \zeta<0$, $\Re \varphi_1(\zeta) < \psi(|\varphi_2(\zeta)|)$
trivially. When $\Re \zeta\ge 0$,
\[
\psi(|\varphi_2(\zeta)|) = \psi\left(\left|\frac{x_T}{ |x_T|}\zeta\right| \left| |x_T|
+ \frac{1}{c_0^2 }\zeta\right|\right)
\ge \psi(\frac{1}{c_0^2}|\zeta|^2) = |\zeta|
> \Re \varphi_1(\zeta).
\]
So by an easy change of variable,
$\kappa_{G_\psi}(p_\delta;(1,x_T)) \le \max((1-\delta_0)^{-1}, \frac{1+\sqrt5}{2c_0})=:C_1$.
\smallskip

To prove Part (2), since the case $x_N=0$ is covered by Remark 1(b), again we may assume $x_N=1$.

\begin{lem}
\label{lowlem}
Let $\varphi=(\varphi_1,\varphi_2)$ be a holomorphic map
from $\D$ to $G_\psi$ with $\varphi(0)=p_\delta$
and $\varphi'(0)= \lambda (1,x_T)$, with $\lambda >0$.

Let $0<r\le 1$, and $M:=\psi(r(\lambda |x_T | +r))$. Then
\begin{equation}
\label{lambdamaj}
\lambda \le \frac{2(M+\delta)}{r}.
\end{equation}
\end{lem}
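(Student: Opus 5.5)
We estimate the first component $\varphi_1$ from above on a small disc and then apply a Borel--Carathéodory (Harnack-type) inequality to the holomorphic function $\varphi_1+\delta$, which vanishes at $0$. Throughout we assume, as the context of Proposition \ref{onehalf} (2) indicates, that $\psi$ is increasing on $[0,1]$; for a non-monotone $\psi$ one would replace $M$ by $\sup_{[0,r(\lambda|x_T|+r)]}\psi$.

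\textbf{Step 1: bound $\varphi_2$ on $|\zeta|\le r$.} Since $\varphi_2(0)=0$ and $|\varphi_2|<1$, the Schwarz lemma applied to $\varphi_2(\zeta)/\zeta$ gives the classical estimate $|\varphi_2(\zeta)|\le |\zeta|\,\dfrac{|\zeta|+|\varphi_2'(0)|}{1+|\varphi_2'(0)||\zeta|}\le |\zeta|\,(|\zeta|+\lambda|x_T|)$, using $\varphi_2'(0)=\lambda x_T$. Hence $|\varphi_2(\zeta)|\le r(r+\lambda|x_T|)$ for $|\zeta|\le r$. If this quantity is at least $1$, then since $\varphi$ maps into $\D^2$ we simply use $|\varphi_2|<1$ together with the monotonicity of $\psi$ (extended by $\psi(1)$ beyond $1$). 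In either case, because $\varphi(\zeta)\in G_\psi$ and $\psi$ is increasing, we get $\Re\varphi_1(\zeta)<\psi(|\varphi_2(\zeta)|)\le M$ on $|\zeta|\le r$.

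\textbf{Step 2: the Borel--Carathéodory bound.} Set $h(\zeta):=\varphi_1(r\zeta)+\delta$ on $\D$. Then $h(0)=0$, $h'(0)=r\lambda$, and $\Re h< M+\delta$. Composing $h$ with the Möbius map from the half-plane $\{\Re w<M+\delta\}$ onto $\D$ sending $0$ to $0$, namely
\[
w\mapsto \frac{w}{2(M+\delta)-w},
\]
and applying the Schwarz lemma yields $|h'(0)|\le 2(M+\delta)$. Therefore $r\lambda\le 2(M+\delta)$, which is exactly \eqref{lambdamaj}.

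The main obstacle is Step 1. One needs the sharpened Schwarz estimate incorporating $\varphi_2'(0)$, rather than the crude bound $|\varphi_2|\le|\zeta|$, in order to produce the argument $r(\lambda|x_T|+r)$ of $\psi$. One must also check the monotonicity needed to pass from $|\varphi_2|$ to $M$. Step 2 is standard once the half-plane bound on $\Re\varphi_1$ is in hand.
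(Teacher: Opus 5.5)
Your proof is correct and follows essentially the same route as the paper: the Schwarz--Pick bound on $\varphi_2(\zeta)/\zeta$ gives $\Re\varphi_1<M$ on $D(0,r)$, and then the Schwarz lemma is applied to the half-plane-to-disc map $w\mapsto w/(2(M+\delta)-w)$, which is, up to sign, the paper's function $f=(\varphi_1+\delta)/(\varphi_1-2M-\delta)$. Your remarks on the monotonicity of $\psi$ and on the case $r(\lambda|x_T|+r)\ge 1$ spell out assumptions that the paper's proof uses without stating them.
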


\begin{proof}
Let $\varphi_2(\zeta)=\zeta h(\zeta)$, with $h(\D)\subset \D$ and $h(0)= \lambda x_T $.
This implies
\[
|h(\zeta)| \le \frac{|h(0)|+|\zeta|}{1+|h(0)||\zeta|} \le \lambda |x_T | +|\zeta|.
\]

It follows that $\max_{|\zeta|\le r} |\varphi_2(\zeta)| \le \psi(r(\lambda |x_T | +r))$,
and so $\Re \varphi_1(\zeta)<M$ for $|\zeta|<r$.
Then  elementary computations show that the function $f(\zeta):= \dfrac{\varphi_1(\zeta)+\delta}{\varphi_1(\zeta)-2M-\delta}$
verifies $f(0)=0$, $f(D(0,r))\subset \D$, and $|f'(0)|= \left| \dfrac{\varphi'_1(0)}{2M+2\delta}\right|$.
From the Schwarz Lemma, we deduce \eqref{lambdamaj}.
\end{proof}

Observe that if $\frac{\psi(x)}{\sqrt x}$ is an increasing function on $(0;1)$, then
$\frac{\sqrt{\psi^{-1}(\delta)}}\delta \le
\frac{t}{ \psi(t^2)} $ if and only if $t \le \sqrt{\psi^{-1}(\delta)}$.
So we may split the proof of Proposition \ref{onehalf} (2) into two cases.

{\it Case $|x_T|\le \sqrt{\psi^{-1}(\delta)} $.}

Let $r:= \frac12 \sqrt{\psi^{-1}(\delta)}$. Since $\varphi_1(\D)\subset \D$, $\lambda \le 1$ so
$|\varphi_2(\zeta)| \le  \psi^{-1}(\delta)$, so $M \le \delta$
and Lemma \ref{lowlem} implies $\lambda \le 8\delta/  \sqrt{\psi^{-1}(\delta)}$, q.e.d.

{\it Case $|x_T|\ge  \sqrt{\psi^{-1}(\delta)} $.}

Let $r:=  \frac12 |x_T|$, so $M \le \psi (|x_T|^2)$, so
it follows from Lemma \ref{lowlem} that $\lambda \le 4(\delta+\psi (|x_T|^2))/|x_T| \le
8\psi (|x_T|^2)/|x_T| $ by the hypothesis on
$|x_T|$.

\section{Proof of Theorem \ref{main}}
\label{mainpf}

\subsection{Lower estimates.}

We use the same method as in the proof of Proposition \ref{onehalf} (2).
We assume $x_N=1$ and let $\varphi$ be as in Lemma \ref{lowlem}, $r$ to be chosen, $\lambda>0$; we need
to bound $\lambda$ from above.
\smallskip

\noindent{\bf Part (1).} We begin by analysing the function $F_3$.

\begin{lem}
\label{maxzone}
If $\frac{\psi(x)}{\sqrt x}$ is an increasing function on $(0;1)$,
if furthermore $\psi_1$ is a decreasing function on $(0;1)$, then
$\frac{\sqrt{\psi^{-1}(\delta)}}{\delta} \le
\frac{8 t}{\sqrt{\psi_1^{-1}\left(\frac{1}{8t}\right)}}$ if and only if
$t\le\frac{\psi^{-1}(\delta)}{8\delta} $.
\end{lem}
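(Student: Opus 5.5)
The plan is to reparametrize the right-hand side by the point $x$ at which $\psi_1^{-1}$ is evaluated. In that variable it reduces to the single function $x\mapsto \sqrt x/\psi(x)$, whose monotonicity is exactly the hypothesis on $\psi(x)/\sqrt x$. Write
\[
g(t):=\frac{8t}{\sqrt{\psi_1^{-1}\left(\frac1{8t}\right)}},
\qquad t_0:=\frac{\psi^{-1}(\delta)}{8\delta},
\]
for $t$ such that $\frac1{8t}$ lies in the range of $\psi_1$ on $(0;1)$. The claim is that $\frac{\sqrt{\psi^{-1}(\delta)}}{\delta}\le g(t)$ holds exactly when $t\le t_0$. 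I will show two things: (i) $g(t_0)=\frac{\sqrt{\psi^{-1}(\delta)}}{\delta}$, and (ii) $g$ is decreasing in $t$. Together these give the equivalence.

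For (i), put $x_0:=\psi^{-1}(\delta)$. Then $\psi_1(x_0)=\psi(x_0)/x_0=\delta/\psi^{-1}(\delta)=\frac1{8t_0}$, so $\psi_1^{-1}\left(\frac1{8t_0}\right)=x_0$. Hence
\[
g(t_0)=\frac{x_0/\delta}{\sqrt{x_0}}=\frac{\sqrt{\psi^{-1}(\delta)}}{\delta}.
\]

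For (ii), make the substitution $x=\psi_1^{-1}\left(\frac1{8t}\right)$, that is, $8t=\frac1{\psi_1(x)}=\frac{x}{\psi(x)}$. Since $\psi_1$ is decreasing, $t\mapsto x$ is increasing: as $t$ grows, $\frac1{8t}$ decreases and therefore $x$ increases. In the new variable,
\[
g=\frac{x/\psi(x)}{\sqrt x}=\frac{\sqrt x}{\psi(x)},
\]
which is decreasing in $x$ because $\psi(x)/\sqrt x$ is increasing. Composing the increasing map $t\mapsto x$ with the decreasing function $x\mapsto \sqrt x/\psi(x)$ shows that $g$ is decreasing in $t$. Consequently $g(t)\ge g(t_0)$ for $t\le t_0$ and $g(t)\le g(t_0)$ for $t\ge t_0$.

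The one delicate point is the ``only if'' direction. It needs strict monotonicity: if $g$ were constant on an interval to the right of $t_0$, the inequality would also hold slightly beyond $t_0$. I would handle this by noting that $\psi_1^{-1}$ is only meaningful when $\psi_1$ is injective, i.e.\ strictly decreasing, so $t\mapsto x$ is strictly increasing. I would then either use the strict form of the monotonicity of $\psi(x)/\sqrt x$, or observe that a flat stretch only affects the boundary case. That case is harmless for the intended use, namely that $F_3$ equals the first term of the minimum for $t\le t_0$ and the second for $t\ge t_0$, up to the constant factors in Theorem \ref{main}(1). Apart from this, and checking that $t$ stays in the range where $\psi_1^{-1}\left(\frac1{8t}\right)\in(0;1)$, the argument is a direct computation.
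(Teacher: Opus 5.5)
Your proof is correct and is essentially the paper's argument with the roles of the variables swapped. The paper fixes $t$, uses that $\delta\mapsto\sqrt{\psi^{-1}(\delta)}/\delta=\sqrt{x_0}/\psi(x_0)$ (with $x_0=\psi^{-1}(\delta)$) is decreasing, and inverts the increasing map $\delta\mapsto\psi^{-1}(\delta)/(8\delta)$ to locate the equality point. You instead fix $\delta$ and show that the right-hand side, which equals $\sqrt x/\psi(x)$ with $x=\psi_1^{-1}(1/(8t))$, decreases in $t$; both routes rest on the same equality computation $\psi_1(\psi^{-1}(\delta))=\delta/\psi^{-1}(\delta)$.

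Your caveat about strictness is accurate: the ``only if'' direction needs strict monotonicity of $\psi(x)/\sqrt x$, and without it the lemma can fail only in the equality case, which is harmless for $F_3$. The paper's proof passes over the same point.
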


\begin{proof}
First notice
that $\delta \mapsto \dfrac{\sqrt{\psi^{-1}(\delta)}}{\delta} =  \dfrac{\sqrt{\psi^{-1}(\delta)}}{\psi\left( \psi^{-1}(\delta)\right)}$
is a decreasing function, so the desired inequality will hold when $\delta \ge \Delta(t)$, where $\Delta(t)$ stands for
the value achieving equality between the two quantities.

Now if $t = \frac{\psi^{-1}(\delta)}{8\delta} =  \frac1{8\psi_1(\psi^{-1}(\delta))} $, then
\[
\psi_1^{-1}\left(\frac{1}{8t}\right) = \psi^{-1}(\delta), \mbox{ so } \frac{8 t}{\sqrt{\psi_1^{-1}\left(\frac{1}{8t}\right)}}
=  \frac{\psi^{-1}(\delta)}{\delta \sqrt{ \psi^{-1}(\delta)}} = \frac{\sqrt{ \psi^{-1}(\delta)}}{\delta }.
\]
So $\Delta^{-1}(\delta) = \frac{\psi^{-1}(\delta)}{8\delta}$, and since
$\delta \mapsto \dfrac{\psi^{-1}(\delta)}\delta =  \dfrac{\psi^{-1}(\delta)}{\psi\left( \psi^{-1}(\delta)\right)}$
is an increasing function under the hypothesis of Part (1) of Theorem \ref{main}, $t\mapsto\Delta(t)$
is increasing as well, and $\delta \ge \Delta(t)$ is equivalent to $\frac{\psi^{-1}(\delta)}{8\delta} \ge t$.
\end{proof}

Lemma \ref{maxzone} allows us to split the proof into two cases.

{\it Case $|x_T|\le\frac{\psi^{-1}(\delta)}{8\delta}$.}

Let $r:=\sqrt{\frac{\psi^{-1}(\delta)}{2}}$.

If $\lambda |x_T|  \le r$, then
for $|\zeta|<r$, $|\varphi_2(\zeta)|<2r^2$, so $\psi(|\varphi_2(\zeta)|)< \delta$.
From Lemma \ref{lowlem} we get $\lambda \le 4 \delta/r = 4 \sqrt2 \delta/\sqrt{\psi^{-1}(\delta)}$.

If $\lambda |x_T| \ge r$,
then for $|\zeta|<r$,
$|\varphi_2(\zeta)|<2r \lambda |x_T|$, so
$M\le \psi(2r\lambda |x_T|)$, so by Lemma \ref{lowlem}
\[
 \lambda \le 2 \frac{\psi(2r\lambda |x_T|)}r
\le 2 \frac{\psi(2\frac{r}{8\delta}\lambda  \psi^{-1}(\delta))}r
\le
2 \frac{r}{4\delta}\lambda \frac{\psi( \psi^{-1}(\delta))}{r}=\frac\lambda2,
\]
if we assume $\frac{r}{4\delta}\lambda \ge 1$,
since $\frac{\psi(x)}{x}$ is decreasing; but this
is a contradiction, so $\lambda  \le \frac{4\delta}{r}$ once again.

{\it Case $\frac{\psi^{-1}(\delta)}{8\delta}\le |x_T| \le 1$.}

Let $r:= \sqrt{\psi_1^{-1}\left(\frac{1}{|x_T|}\right)}$.
If $\lambda |x_T|\le r$, then we are done. So assume
$\lambda |x_T|\ge r$, then for $|\zeta|\le r$, then $M \le \psi(2r\lambda |x_T|)$,
and by Lemma \ref{lowlem}, $\lambda r\le 2(\delta+ \psi(2r\lambda |x_T|))$.

Since $ |x_T|\ge\frac{\psi^{-1}(\delta)}{8\delta}
=\frac1{8\psi_1(\psi^{-1}(\delta))}$, by hypothesis (H)
we have
\begin{multline*}
\delta \le \psi\left( \psi_1^{-1}\left(\frac{1}{8|x_T|}\right)\right)
\le \psi\left( K^3 \psi_1^{-1}\left(\frac{1}{|x_T|}\right)\right)
\\
\le
\max(\frac{K^3}2,1) \psi\left( 2 \psi_1^{-1}\left(\frac{1}{|x_T|}\right)\right)
\le
\max(\frac{K^3}2,1) \psi(2r^2)
\le \max(\frac{K^3}2,1) \psi(2r\lambda |x_T|).
\end{multline*}
So we have
$\lambda r \le C_3 \psi(2r\lambda |x_T|)$,
so $1 \le 2 C_3 |x_T| \psi_1(2r\lambda |x_T|)$,
equivalently, choosing $m\in \N$ such that $2^m\ge 2C_3$,
\[
\psi_1(2r\lambda |x_T|) \ge
\frac{1}{2 C_3}
\psi_1 \left( \psi_1^{-1}\left(\frac{1}{|x_T|}\right)\right)
\ge \frac{2^m}{2 C_3}
\psi_1 \left( K^{m}\psi_1^{-1}\left(\frac{1}{|x_T|}\right)\right)
\ge
\psi_1 \left( K^{m}\psi_1^{-1}\left(\frac{1}{|x_T|}\right)\right),
\]
so $2r\lambda |x_T| \le K^m r^2$, thus $\lambda \le \frac{K^m }{2|x_T|}r$, q.e.d.
\smallskip

\noindent{\bf Part (3).} If $|\lambda|\le 4\delta$, the conclusion is already obtained
since $\psi^{-1}(\delta)\le 1$.
Hence
we may choose $r=4\delta/|\lambda|$ in Lemma \ref{lowlem}, and get
that if there is a map $\varphi$ as in that Lemma,
\[
2\delta = \frac{r|\lambda|}2 \le \delta + \psi (r^2+|\lambda x_T| r),
\]
thus $\delta \le \psi (r^2+|\lambda x_T| r)$, so, since $\psi$ is increasing,
\[
\psi^{-1}(\delta) \le r^2+|\lambda x_T| r \le 2\max(r^2, 4\delta |x_T| ).
\]
However the hypothesis implies that $8\delta |x_T|< \psi^{-1}(\delta)$, so we must have $\psi^{-1}(\delta) \le 2 r^2$,
which means $\ds\frac1{|\lambda|} \ge \frac1{4\sqrt 2} \frac{\sqrt{\psi^{-1}(\delta)}}{\delta}$.
\smallskip

\noindent{\bf Part (2).} 

Proposition \ref{tangent} already covers the case where $|x_T|\ge |x_N|$. 
So we will assume $|x_T| \le |x_N|$, and 
by homogeneity, $x_N=1$. 

Take $\varphi$ as in Lemma \ref{lowlem}. We must have $\lambda \le 1$ by the Schwarz Lemma applied to $\varphi_1$. For $\zeta\in \D$,
$\varphi_1(\zeta)=-\delta + \lambda \zeta + \zeta^2 \tilde\varphi_1(\zeta)$, $\tilde\varphi_1\in\mathcal O(\D)$ and
\[
|\tilde\varphi(\zeta)| \le \sup_{|\zeta| =1} |\varphi_1(\zeta)+\delta - \lambda \zeta| \le 1+\delta+\lambda \le 3.
\]
Thus $\Re \varphi_1(\zeta) \ge -\delta + \lambda \Re\zeta - 3 |\zeta|^2$.

On the other hand, by the proof of Lemma \ref{lowlem}, $|\varphi_2(\zeta)|\le \lambda |x_T ||\zeta| +|\zeta|^2$. Applying 
the condition $\Re \varphi_1(\zeta) \le |\varphi_2(\zeta)|$
to the value $\zeta=\sqrt \delta$, we obtain 
\[
\lambda \sqrt \delta - 4 \delta < \lambda |x_T |\sqrt \delta + \delta, \mbox{ i.e. }
\lambda < \frac{5 \sqrt \delta}{1-|x_T |},
\]
so we obtain the conclusion with $c= \frac15$. 

\subsection{Upper estimates}

For those estimates, we need to construct maps $\varphi: \D \longrightarrow G_\psi$
with $\varphi'(0) = \lambda (1, x_T)$ and $\lambda$ as large
as we can (which is still a quantity tending to $0$
as $\delta$ tends to $0$).
\smallskip

\noindent{\bf Part (1).} By Lemma \ref{maxzone}, we may split the proof into two cases.

{\it Case $|x_T| \le   \frac{\psi^{-1}(\delta)}{ \delta}  $.}

Let $\lambda:= \frac{ \delta}{2\sqrt{\psi^{-1}(\delta)}}$, 
and consider the map given, for $\zeta\in \D$, by
\[
\varphi(\zeta)= \left(-\delta+ \lambda  \zeta, \lambda x_T \zeta +\frac{\zeta^2}2 \right).
\]
It is enough to show that $\varphi(\D)\subset G_\psi$.

First note that $\lambda |x_T| \le \frac12 \sqrt{\psi^{-1}(\delta)}$, so
\[
|\varphi_2(\zeta)| \le  \frac12 \sqrt{\psi^{-1}(\delta)} |\zeta|+\frac{|\zeta|^2}2 .
\]

When $|\zeta|< 2 \sqrt{\psi^{-1}(\delta)}$, then $\Re ( \lambda \zeta ) \le  \lambda|\zeta|  < \delta$,
so $\Re \varphi_1(\zeta) < 0 $ and $\varphi(\zeta)\in G_\psi$.

Suppose now  $|\zeta|\ge 2 \sqrt{\psi^{-1}(\delta)}  $. Then
\[
\left| \lambda x_T \zeta +\frac{\zeta^2}2 \right| \ge \frac{|\zeta|^2}2 - \frac14 |\zeta|^2= \frac{|\zeta|^2}4.
\]
So in this case it will be enough to prove
\[
 \frac{\delta}{2\sqrt{\psi^{-1}(\delta)}} |\zeta| < \psi\left( \frac{|\zeta|^2}4\right) +\delta,
\]
for which it is enough to have
\[
\frac{\psi\left( (\frac{|\zeta|}2)^2\right) } {\frac{|\zeta|}2}\ge  \frac{\delta}{\sqrt{\psi^{-1}(\delta)}}
 =  \frac{\psi((\sqrt{\psi^{-1}(\delta)})^2)}{\sqrt{\psi^{-1}(\delta)}},
\]
which is satisfied since  $|\zeta|/2 \ge  \sqrt{\psi^{-1}(\delta)}$,
and $\frac{\psi(x^2)}x= \frac{\psi(x^2)}{\sqrt{x^2}}$ is increasing.

Notice that in this case, we have not used the hypothesis that $\psi_1$ be decreasing,
so that this also proves the required upper estimate for Part (3).

{\it Case $|x_T|\ge\frac{\psi^{-1}(\delta)}{8\delta}  $.}

Assume $\delta\le \frac12$.
It will be enough to prove that $\varphi(\zeta)\in G_\psi$
for $|\zeta|<1-\delta$ and
\[
\varphi(\zeta):=(-\delta+\lambda\zeta,
\lambda x_T \zeta + \frac{x_T}{|x_T|}\zeta^2),
\mbox{ with }
\lambda:= \frac{1}{|x_T|}\sqrt{\psi_1^{-1}\left(\frac{1}{|x_T|}\right)}.
\]
Note that $\lambda \le \psi(1)$ so that $\varphi(\zeta) \in \D^2$ for  $|\zeta|\le \frac12 \min(1,\frac1{\psi(1)})$.

Indeed, for $y\ge 1$,
\[
\psi(1)=\frac{\psi(1^2)}1 \ge \frac{\psi(y^{-2})}{y^{-1}} = \frac{\psi_1(y^{-2})}{y},
\]
so, since $\psi_1$ is decreasing, $\psi_1^{-1}(\psi(1)y) \le y^{-2}$. Now apply this to $y= \frac1{\psi(1)|x_T|}$.

For $\Re \zeta \le 0$, clearly $\varphi(\zeta)\in G_\psi$.
For $\Re \zeta > 0$,
\[
|\varphi_2(\zeta)|\ge \left| \frac{x_T}{|x_T|}\zeta \right|
 \left| \lambda |x_T| + \zeta \right|
 \ge |\zeta| \max( |\zeta|, \lambda |x_T|).
\]
On the other hand, $\Re \varphi_1(\zeta) \le \lambda |\zeta|$.

{\it Subcase 1. $|\zeta|\le \lambda |x_T|$.}

It is enough to verify $\lambda |\zeta|\le \psi(\lambda |x_T||\zeta|)$,
i.e. $\frac{1}{|x_T|}\le \psi_1(\lambda |x_T||\zeta|)$.
But $\psi_1$ is a decreasing function, so it is enough to
check this for $ |\zeta|= \lambda |x_T|$, where we obtain
\[
\psi_1(\lambda^2 |x_T|^2) = \psi_1(\psi_1^{-1}\left(\frac{1}{|x_T|}\right))= \frac{1}{|x_T|}.
\]

{\it Subcase 2. $|\zeta|\ge \lambda |x_T|$.}

It is enough to verify $\lambda |\zeta|\le \psi(|\zeta|^2)$,
but since $\frac{\psi(x)}{\sqrt x}$ is an increasing function,
\[
\frac{ \psi(|\zeta|^2)}{|\zeta|}
\ge \frac{ \psi(\lambda^2 |x_T|^2)}{\lambda |x_T|}
= \lambda |x_T| \psi_1(\lambda^2 |x_T|^2)= \lambda.
\]

\noindent{\bf Part (2).} 

This proof will gather piecemeal results on a number of different cases.  We would like to find a more elegant argument.


Proposition \ref{tangent} already covers the case where $|x_T|\ge |x_N|$. 
On the other hand, Part (3) proves in particular the lower estimate for the case $|x_T|\le \frac18 |x_N|$. 
So we will assume  $\frac18 |x_N| < |x_T| < |x_N|$, and by homogeneity, $x_N=1$. 

{\it Case 1.} $0<\delta<(1-|x_T|)^2$.

Define
\[
\alpha:=  \frac{(1-|x_T|)^{2}}{2\delta}, \quad
\varphi(\zeta):=  \left(-\delta + \zeta,
{x_T} \zeta  + \alpha \frac{x_T}{|x_T|}\zeta^2 \right).
\]
We claim that $\varphi(\zeta)\in G_\psi$ for
$|\zeta|< \frac12 \sqrt\d (1-|x_T|)^{-1} $,
which will prove the estimate.

First note that $|\varphi_1(\zeta)| \le \delta + \frac12  <1$ since $\delta
 <\frac12$, and
$|\varphi_2(\zeta)| \le \frac{|x_T|\sqrt\d}{2(1-|x_T|)}    + \frac1{8}
\le \frac12 + \frac1{8}  <1$.

We must still check the condition $\Re \varphi_1( \zeta) < |\varphi_2(\zeta)|$.
 For $ \Re \zeta < \delta$, this is immediate.
For $x:= \Re \zeta \ge \delta >0$, note that
\[
|\varphi_2(\zeta)|= |\zeta|\left| |x_T| + \alpha \zeta \right| \ge
x(|x_T| +\alpha x),
\]
so that the required inequality will be satisfied if
\[
x(|x_T| +\alpha x) -(x-\delta) = \alpha x^2 -(1-|x_T|)x + \delta
=  \frac{1}{2\delta}
\left( ((1-|x_T|)x)^{2} - 2\delta  (1-|x_T|)x + 2\delta^2 \right) >0,
\]
which clearly holds.
\smallskip

{\it Case 2.} $\delta \ge (1-|x_T|)^2$.

First notice that since $\frac18 \le |x_T| \le 1$ and $\frac{1-|x_T|}{\sqrt\delta} \le 1$,
$\frac18 \le \max\left( \frac{|x_N|-|x_T|}{\sqrt \delta}, |x_T| \right) \le 1$, so it will
be enough to find a map $\varphi\in\mathcal O(\D,G_\psi)$ with 
$\varphi(0)=p_\delta$
and $\varphi'(0)= \lambda (1,x_T)$, with $\lambda $ some nonzero constant. 
Let 
\[
\varphi(\zeta):=  \left(-\delta + \lambda \zeta,
\lambda {x_T} \zeta  + \frac14 \frac{x_T}{|x_T|}\zeta^2 \right).
\]
We claim that $\varphi(\D)\subset G_\psi$ for $\lambda = \min (1-\delta_0, \frac18)$. 
Such a choice of $\lambda$ in particular implies that $\varphi(\D)\subset \D^2$.

Let $\zeta=x+iy$, $x,y\in\R$. 
Note first that $\Re \varphi_1(\zeta) <0$ when $x<\delta/\lambda$, so that it is enough to look at 
values of $\zeta$ with $\lambda x \ge \delta$. We then need to check $(\Re \varphi_1(\zeta))^2 < |\varphi_2(\zeta)|^2$,
i.e.
\[
(\lambda x - \delta)^2 < (x^2+y^2) \left( (\lambda |x_T| + \frac{x}4)^2 + \frac{y^2}{16} \right).
\]
This will hold for all $\zeta\in \D$ if and only if it holds for all $x\in(-1;1)$, $y=0$, so we need to check
\[
\lambda^2 x^2 (1-|x_T|^2) + \delta^2 < 2 \lambda x  \delta + \lambda |x_T| \frac{x^3}2 + \frac{x^4}{16}.
\]
The left hand side is dominated by $2 \lambda^2 x^2 \sqrt \delta + \lambda x \delta $, so it is enough to check
\[
2 \lambda^2 x^2 \sqrt \delta < \lambda x  \delta + \lambda |x_T| \frac{x^3}2 + \frac{x^4}{16}.
\]

{\it Case 2.1.} $x< 4 \sqrt \delta$.

It will be enough to verify $8 \lambda^2 x \delta \le \lambda x  \delta$, which follows from $\lambda \le \frac18$.

{\it Case 2.2.} $x\ge 4 \sqrt \delta$.

Then 
\[
\lambda |x_T| \frac{x^3}2 \ge  \lambda \frac{x^3}{16}\ge \frac1{4} \lambda x^2 \sqrt \delta \ge 2 \lambda^2 x^2 \sqrt \delta,
\]
whenever $\lambda \le \frac18$. Since $\frac{x^4}{16}>0$, this finishes the proof.
\smallskip

\noindent{\bf Part (3).} The case $|x_T| \le   \frac{\psi^{-1}(\delta)}{ \delta}$
has been covered by the proof of Part (1), so we may assume
$|x_T| \ge \frac{\psi^{-1}(\delta)}{ \delta}  $.

Consider the map given, for $\zeta\in \D$, by
\[
\varphi(\zeta)= \left(-\delta+ \frac{1}{x_T} \zeta, \zeta\right).
\]
We want to see that $\varphi(\zeta)\in G_\psi$ when $|\zeta| <  c \frac{\delta}{\sqrt{\psi^{-1}(\delta)}}$.
This condition implies that $|\zeta|/|x_T| \le  \frac{ \delta |\zeta|}{\psi^{-1}(\delta)} <1$.
It is thus enough to see that
\begin{equation}
\label{suffx}
-\delta+ \Re\left( \frac{1}{x_T} \zeta \right) \le  -\delta+   \frac{ \delta}{\psi^{-1}(\delta)} |\zeta| < \psi(|\zeta|).
\end{equation}
This is obviously satisfied for $|\zeta|\le \psi^{-1}(\delta)$; for
$|\zeta|> \psi^{-1}(\delta)$,  since $\frac{\psi(x)}{x}$ is increasing,
\[
\frac{\delta}{\psi^{-1}(\delta)} \le \frac{\psi(|\zeta|)}{|\zeta|} < \frac{\psi(|\zeta|)}{|\zeta|} +
\frac{\delta}{|\zeta|},
\]
which proves the property.

{}

\end{document}